\newtheorem{theorem}{Theorem}[section] 
\newtheorem{definition}[theorem]{Definition}
\newtheorem{proposition}[theorem]{Proposition}
\newtheorem{lemma}[theorem]{Lemma}
\theoremstyle{definition}
\newtheorem{example}[theorem]{Example}
\def\A{\mathbb{A}}
\def\cf{\mathcal{F}}
\def\cg{\mathcal{G}}
\def\ci{\mathcal{I}}
\def\cA{\mathcal{A}}
\def\cL{\mathcal{L}}
\def\k{\ensuremath{\mathbf{k}}}
\def\M{\mathbb{M}}
\def\P{\mathbb{P}}
\def\R{\mathbb{R}}
\def\U{\mathbb{U}}
\def\Z{\mathbb{Z}}
\def\O{\mathcal{O}}
\def\<{\ensuremath{\langle}}
\def\>{\ensuremath{\rangle}}
\def\excise#1{}
\DeclareMathOperator{\Trop}{Trop} 
\DeclareMathOperator{\Crem}{Crem}
\DeclareMathOperator{\Trunc}{Trunc}
\DeclareMathOperator{\codim}{codim}
\DeclareMathOperator{\Star}{Star}
\begin{document}

\title[Log-Concavity and the Bergman fan]{Log-concavity of characteristic polynomials and the Bergman fan of matroids}

\author[Huh]{June Huh}
\author[Katz]{Eric Katz}
\address{Department of Mathematics \\ University of Michigan \\
Ann Arbor \\ MI 48109}
\email{junehuh@umich.edu}
\address{Eric Katz: Department of Combinatorics \& Optimization \\ University of Waterloo \\ Waterloo \\ ON \\ Canada N2L 3G1}
\email{eekatz@math.uwaterloo.ca}

\begin{abstract}
In a recent paper, the first author proved the log-concavity of the coefficients of the characteristic polynomial of a matroid realizable over a field of characteristic $0$, answering a long-standing conjecture of Read in graph theory.  We extend the proof to all realizable matroids, making progress towards a more general conjecture of Rota-Heron-Welsh.  Our proof follows from an identification of the coefficients of the reduced characteristic polynomial as answers to particular intersection problems on a toric variety.  The log-concavity then follows from an inequality of Hodge type.
\end{abstract}

\maketitle


\section{Introduction}

In a recent paper \cite{Huh}, the first author proved that if $\M$ is a rank $r+1$ matroid realizable over a field of characteristic $0$ with characteristic polynomial,
\[
\chi_\M(q)=\mu_{0}q^{r+1}-\mu_{1}q^{r}+\cdots+(-1)^{r+1}\mu_{r+1}
\]
then the sequence $\mu_0,\dots,\mu_{r+1}$ is log-concave, that is, for $1 \le i \le r$,
\[
\mu_{i-1}\mu_{i+1}\leq \mu_i^2.
\]
Because graphic matroids are realizable over any field, this result proved a conjecture due to Read \cite{Read} that chromatic polynomials of graphs are unimodal.  There is a more general conjecture of Rota-Heron-Welsh \cite{Rota} that the coefficients of the characteristic polynomial of any finite matroid form a log-concave sequence.  The purpose of this paper is to extend the result of the first author to include all realizable matroids and to give some hints to an approach for proving the Rota-Heron-Welsh conjecture in general. A nice overview of the conjecture can be found in \cite{Aigner}.

Let us explain the first author's proof and our extension.  His proof uses a Morse-theoretic argument to relate $\mu_i$ to Milnor numbers of the singularity at the origin of a hyperplane arrangement with matroid $\M$.  These numbers are mixed multiplicites and are log-concave by the Khovanskii-Teissier inequality \cite[Example 1.6.4]{Lazarsfeld}.   Our method in this paper is to interpret the numbers $\mu_i$ as intersection numbers and apply the Khovanskii-Teissier inequality in a more classical framework. \excise{The fact that these mixed multiplicities are, in fact, intersection numbers is elaborated in a very nice foundational paper of Kleiman-Thorup \cite{KT}.} To identify the coefficients as intersection numbers, we use the combinatorial interpretation of the intersection theory on toric varieties developed by Fulton-Sturmfels \cite{FS} and studied in the context of tropical intersection theory by Mikhalkin \cite{Mi06}, Allermann-Rau \cite{AR}, and the second author \cite{KTT,KTIT}.  We use the fact that there is an explicit Poincar\'{e} dual to a compactification of the complement of a hyperplane arrangement in a particular toric variety. The Poincar\'{e} dual arises from the description of the Bergman fan studied by Ardila-Klivans \cite{AKBergman}. 

Let $\mathcal{A}$ be an arrangement of hyperplanes on an $r$-dimensional projective subspace $V \subset \mathbb{P}^n$ realizing $\M$. Let $\widetilde{V} \subset \mathbb{P}^n \times \mathbb{P}^n$ be the closure of the graph of the Cremona transformation
\[
\text{Crem} : \mathbb{P}^n \dashrightarrow \mathbb{P}^n, \qquad (z_0:\cdots:z_n) \mapsto  (z_0^{-1}:\cdots:z_n^{-1}) 
\]
restricted to  $V \setminus \mathcal{A}$. 
$\widetilde{V}$ is a compactification of $V \setminus \mathcal{A}$ whose boundary is a divisor with normal crossings.  By virtue of the description of the class of $\widetilde{V}$, we have the following result:

\begin{theorem}\label{graph}
Write 
\[
\overline{\chi}_\mathbb{M}(q) := \chi_\M(q)/(q-1) =\sum_{i=0}^r (-1)^i\mu^i q^{r-i}.
\]
Then
\[
[\widetilde{V}] = \sum_{i=0}^r \mu^i [\mathbb{P}^{r-i} \times \mathbb{P}^i] \in A_r(\mathbb{P}^n \times \mathbb{P}^n)
\]
in the Chow homology group of $\mathbb{P}^n \times \mathbb{P}^n$.
\end{theorem}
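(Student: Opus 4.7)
The plan is to expand $[\widetilde{V}]$ in the basis $\{[\mathbb{P}^{r-i} \times \mathbb{P}^i] : 0 \le i \le r\}$ of $A_r(\mathbb{P}^n \times \mathbb{P}^n)$ by pairing against the intersection-dual basis $\{H_1^{r-i} H_2^{i}\}$, where $H_1, H_2$ are the hyperplane pullbacks from the two factors. Writing $[\widetilde{V}] = \sum_i a_i [\mathbb{P}^{r-i} \times \mathbb{P}^i]$, this reduces the theorem to the identity
\[
a_i \;=\; \deg\bigl([\widetilde{V}]\cdot H_1^{r-i} H_2^{i}\bigr) \;=\; \mu^i \qquad \text{for } i = 0, 1, \ldots, r.
\]

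To compute $a_i$ geometrically, I would choose generic linear subspaces $L_1 \subset \mathbb{P}^n$ of codimension $r-i$ and $L_2 \subset \mathbb{P}^n$ of codimension $i$. Since $\widetilde{V}$ is the closure of the graph of $\Crem|_{V \setminus \cA}$, and both projections $\pi_1, \pi_2$ restrict to isomorphisms on that open locus, Kleiman transversality (applied in the ambient $\PGL_{n+1}\times\PGL_{n+1}$ action on $\mathbb{P}^n \times \mathbb{P}^n$) places the intersection $\widetilde{V} \cap \pi_1^{-1}(L_1) \cap \pi_2^{-1}(L_2)$ entirely inside the image of $V \setminus \cA$ as a transverse reduced set of points. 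Therefore
\[
a_i \;=\; \#\{p \in V \setminus \cA : p \in L_1 \text{ and } \Crem(p) \in L_2\} \;=\; \deg\bigl(\overline{\Crem(V \cap L_1)}\bigr),
\]
i.e.\ the degree of the Cremona image of a generic $i$-dimensional linear section of $V$ in $\mathbb{P}^n$.

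The final and most substantive step is to identify $\deg\bigl(\overline{\Crem(V \cap L_1)}\bigr)$ with $\mu^i$, for which I would invoke the Bergman-fan description of Ardila-Klivans. The tropicalization of $V \setminus \cA$ inside the open torus of $\mathbb{P}^n$ is the Bergman fan $B(\M) \subset \R^n/\R\mathbf{1}$, a pure $r$-dimensional balanced fan whose maximal cones are indexed by maximal flags of proper flats of $\M$, and the tropicalization of $\Crem(V \setminus \cA)$ is its reflection $-B(\M)$. The degree in question then becomes a stable tropical intersection in the torus of $\mathbb{P}^n$; via the Fulton-Sturmfels / Mikhalkin / Allermann-Rau intersection-theoretic formalism on toric varieties it unfolds into a weighted flag count, which by Möbius inversion (Whitney's theorem for the characteristic polynomial) collapses to the $i$-th coefficient of $\overline{\chi}_\M(q)$, namely $\mu^i$. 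The principal obstacle is precisely this last combinatorial matchup, and in particular correctly accounting for the factor $(q-1)$ separating $\chi_\M$ from $\overline{\chi}_\M$; this factor corresponds geometrically to the lineality subspace $\R\mathbf{1}$ of the Bergman fan, i.e.\ to working on $\mathbb{P}^n$ rather than on its affine cone $\A^{n+1}$.
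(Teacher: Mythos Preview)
Your overall architecture is exactly the paper's: pair $[\widetilde{V}]$ against $H_1^{r-i}H_2^{i}$, pass to the toric compactification, and identify the resulting number with a tropical/fan-theoretic intersection on the Bergman fan. The paper does precisely this by realizing $X(\Delta_{\U_n})$ as the graph closure of $\Crem$ in $\P^n\times\P^n$, so that $H_1,H_2$ pull back to the piecewise-linear classes $\alpha=\min(0,x_1,\dots,x_n)$ and $\Crem^*\alpha$; then $(H_1^{r-k}H_2^{k})\cap[\widetilde{V}]=\deg(\alpha^{r-k}\cup(\Crem^*\alpha)^k\cup\Delta_\M)$, and this last number is shown to equal $\mu^k$.

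The genuine gap in your sketch is the last paragraph, which is where all of the content lives. You write that the stable tropical intersection ``unfolds into a weighted flag count, which by M\"obius inversion (Whitney's theorem) collapses to $\mu^i$,'' but this hides two nontrivial and separate computations that the paper carries out explicitly. First, on the combinatorial side, the paper does \emph{not} use Whitney's broken-circuit theorem; it uses an iterated application of \emph{Weisner's theorem} to show that $\mu^k$ counts \emph{initial descending} $k$-step flags of flats, i.e.\ flags $\emptyset\subsetneq F_1\subsetneq\cdots\subsetneq F_k$ with $r(F_i)=i$ and $\min(F_1)>\cdots>\min(F_k)>0$ (this is Proposition~2.4, and the condition $\min(F_k)>0$ is exactly what encodes the division by $q-1$ you mention). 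Second, on the intersection-theoretic side, one must actually evaluate the fan displacement rule: the paper first proves $\alpha\cup\Delta_\M=\Delta_{\Trunc_{r-1}(\M)}$ (Lemma~5.1, a direct piecewise-linear calculation), and then computes $\deg\bigl(\Delta_{\Trunc_{n-k}(\U_n)}\cup\Crem^*(\Delta_{\Trunc_k(\M)})\bigr)$ with a specific generic displacement vector $0<v_1<\cdots<v_n$, showing that the surviving pairs of cones are in bijection with the initial descending flags (Proposition~5.2). Neither of these steps is a formality, and neither follows from the statements you invoke. A smaller point: your appeal to Kleiman transversality to obtain a \emph{reduced} intersection is unsafe in positive characteristic; the paper sidesteps this entirely by working with Minkowski weights on $X(\Delta_{\U_n})$ rather than with generic linear sections.
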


The log-concavity of $\mu^0,\ldots,\mu^r$, and hence that of $\mu_0,\ldots,\mu_{r+1}$, follows from applying the Khovanskii-Teissier inequality to the irreducibility variety $\widetilde{V}$. Our proof is largely combinatorial except for establishing the Khovanskii-Teissier inequality in Lemma \ref{l:logconcave} which requires the work of Fulton-Sturmfels and a classical proof of Khovanskii-Teissier.  For that reason, we do not know if our proof can be extended to general matroids. 

There is a related conjecture of Welsh and Mason that the number $f_i$ of independent subsets of cardinality $i$ form a log-concave sequence for any matroid \cite{ Mason, Welsh}. Theorem \ref{graph} implies that $f_i$ of a realizable matroid form a log-concave sequence because $f_i$ of a matroid is the coefficient $\mu^i$ of the reduced characteristic polynomial of its free coextension: See \cite[Section 7.4]{BrylawskiBook}, \cite[Theorem 4.2]{Brylawski}, and also \cite[Proposition 3.3]{Lenz}. We refer to \cite{Lenz} for further implications of Theorem \ref{graph}.

We would like to thank Sang-il Oum, Sam Payne, David Speyer, and David Wagner for helpful discussions.

\section{Matroids}

Let $\M$ be a rank $r+1$ matroid on the set $E=\{0,\ldots,n\}$ with rank function $r$. The characteristic polynomial of $\M$ is defined to be
\[\chi_\M(q)=\sum_{F\in\cL_\M} \mu(\hat{0},F)q^{r+1-r(F)}\]
where $\cL_\M$ is the lattice of flats, $\hat{0}$ is the minimum of $\cL_\M$, and $\mu$ is the M\"obius function of $\cL_\M$.  We write
\[\chi_\M(q)=\mu_{0}q^{r+1}-\mu_1q^r+\dots+(-1)^{r+1}\mu_{r+1}.\]
If $\M$ is realizable, then there is an $r$-dimensional projective subspace $V\subset\P^n$ which represents $\M$, that is, for $I\subset E$, 
\[r(I)=\codim\big((V\cap F_I)\subseteq V\big)\]
where $F_I$ is the coordinate flat given by $z_i=0$ for $i\in I$. The coordinate hyperplanes of $\P^n$ restrict to $V$ and define a projective arrangement $\cA$ on $V$.

In the sequel, we will restrict ourselves to simple matroids.  \excise{A point $x\in E$ is a {\em loop} if $r(\{x\})=0$.  A pair of points $x,y\in E$ is {\em parallel} if neither $x$ nor $y$ is a loop and $r(\{x,y\})=1$.} Recall that $\M$ is {\em simple} if it has no loops or pairs of parallel points.  \excise{In this case, $\hat{0}=\emptyset$.}  If $\M$ is not simple, then we can replace it by $\widehat{\M}$, the associated combinatorial geometry of $\M$, a matroid obtained by deleting loops and contracting parallel points \cite[Section 3.2]{StanleyNotes}.  $\widehat{\M}$ is a simple matroid whose lattice of flats is isomorphic to that of $\M$.  Therefore, $\widehat{\M}$ has the same characteristic polynomial as $\M$.  If $\M$ is realizable over a field $\k$, then so is $\widehat{\M}$.  Therefore, by proving the simple case, we establish the log-concavity for all realizable matroids.  

\begin{definition}  Let $c\cA$ be the {\em cone} of $\cA$, an essential central arrangement on $\A^{r+1}$ obtained by pulling back $\cA$ by $\A^{r+1}\setminus\{0\}\rightarrow V$.  Let $d\cA$ be the {\em decone} of $\cA$, an affine arrangement on $\A^r$ obtained from $\cA$ by declaring the hyperplane labelled by $0\in E$ to be the hyperplane at infinity.
\end{definition}

We have the posets of flats $\cL_\M,\cL_{c\cA},\cL_{d\cA}$.  The first is ordered by inclusion, and the others are ordered by reverse inclusion.  $\cL_{c\cA}$ is a geometric lattice isomorphic to $\cL_\M$ while $\cL_{d\cA}$ is a meet-semilattice but not a lattice in general.

Note that $\chi_\M(q)$ is divisible by $q-1$ in $\Z[q]$.  
\begin{definition}
The {\em reduced characteristic polynomial} $\overline{\chi}_\M(q)$ is 
\[\overline{\chi}_\M(q)=\chi_\M(q)/(q-1).\]
Define the numbers $\mu^0,\mu^1,\dots,\mu^r$ by
\[\overline{\chi}_\M(q)=\sum_{i=0}^r (-1)^i\mu^iq^{r-i}.\]
\end{definition}

$\overline{\chi}_\M(q)$ is the characteristic polynomial of $d\cA$.  Note that the log-concavity of $\overline{\chi}_\M$ implies that of $\chi_\M$.

\begin{definition}
Let $\emptyset\subsetneq F_1\subsetneq F_2\subsetneq\cdots\subsetneq F_k$ be a $k$-step flag of flats in $\M$.
\begin{enumerate}
\item The flag is said to be {\em initial} if $r(F_i)=i$.
\item The flag is said to be {\em descending} if $\min(F_1)>\min(F_2)>\dots>\min(F_k)>0$.
\end{enumerate}
Write $S_k$ for the set of initial, descending $k$-step flags of flats.
\end{definition}
The condition $0\not\in F_k$ implies that the flag is, in fact, a flag in $d\cA$.

\begin{proposition} \label{p:coeff} We have the following expression for $\mu^k$,
\[\mu^k=|S_k|.\]
\end{proposition}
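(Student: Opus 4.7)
The plan is to establish the explicit bijection
\[
S_k \;\longleftrightarrow\; \{I\subseteq E\setminus\{0\}: |I|=k,\ I\text{ is NBC}\}
\]
between initial descending $k$-flags and size-$k$ NBC (no-broken-circuit) subsets of $E$ not containing $0$, where the NBC condition is taken with respect to the linear order $0<1<\cdots<n$. Given an NBC set $I=\{e_1>e_2>\cdots>e_k\}$ with $0\notin I$, define $F_i:=\mathrm{cl}(\{e_1,\ldots,e_i\})$; since NBC sets are independent, $r(F_i)=i$. For the flag to be descending with $\min F_i=e_i$, one needs $\min(\mathrm{cl}(J))=\min(J)$ for every $J\subseteq I$. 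If some $e<\min J$ were in $\mathrm{cl}(J)$, then $J\cup\{e\}$ would contain a circuit $C$ with $\min C=e$, and $C\setminus\{\min C\}\subseteq J\subseteq I$ would be a broken circuit in $I$, contradicting NBC. Conversely, given a flag in $S_k$, put $e_i:=\min F_i$; induction on $i$ shows $\{e_1,\ldots,e_i\}$ is independent with closure $F_i$, and any broken circuit inside $I=\{e_1,\ldots,e_k\}$ would force its minimum into some $F_j$ strictly below $e_j=\min F_j$, a contradiction.

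Granting this bijection, the proof is completed by Whitney's broken-circuit theorem, which asserts that $\mu_k$ equals the total number of $k$-element NBC subsets of $E$. The map $I\mapsto I\setminus\{0\}$ identifies $k$-element NBC sets containing $0$ with $(k-1)$-element NBC sets not containing $0$: the inverse $J\mapsto J\cup\{0\}$ is well-defined because $0=\min E$ never appears in a broken circuit, so adjoining it neither creates a new broken circuit nor destroys independence. Combined with the main bijection this yields
\[
\mu_k = |S_k| + |S_{k-1}|.
\]

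On the algebraic side, comparing coefficients in $\chi_\M(q)=(q-1)\overline{\chi}_\M(q)$ gives $\mu_k=\mu^k+\mu^{k-1}$ with the convention $\mu^{-1}:=0$. Starting from $\mu^0=1=|S_0|$, induction on $k$ then yields $\mu^k=|S_k|$.

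The principal technical obstacle is the equivalence ``$I$ is NBC if and only if the chain $F_i=\mathrm{cl}\{e_1,\ldots,e_i\}$ satisfies $\min F_i=e_i$ for every $i$'': the descending hypothesis constrains the minimum of \emph{every} flat in the chain, not merely the top, which is precisely what forces the use of the downward closure of the NBC property under taking subsets.
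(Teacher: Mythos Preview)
Your proof is correct and takes a genuinely different route from the paper's. The paper works directly with the M\"obius function: it starts from $\mu^k=(-1)^k\sum_I\mu(\hat 0,I)$ over rank-$k$ flats of $d\cA$ and iterates Weisner's theorem $\mu(\hat 0,I)=-\sum_{a\notin F\lessdot I}\mu(\hat 0,F)$, choosing $a$ at each step to be the minimum of the current flat. Unwinding the recursion yields precisely the count of initial descending flags, with no mention of broken circuits.

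Your argument instead imports Whitney's NBC theorem as a black box and sets up an explicit bijection between $S_k$ and the $k$-element NBC sets avoiding $0$; the identity $\mu_k=|S_k|+|S_{k-1}|$ then drops out of the observation that adjoining or removing $0$ toggles between NBC sets that contain $0$ and those that do not. This is slightly less self-contained than the paper's proof, but it is more structural: the bijection makes transparent why the descending-minimum condition is exactly the right one, and the split by membership of $0$ gives a clean combinatorial reason for the factor $(q-1)$ in $\chi_\M$. The two arguments are morally the same---iterating Weisner with the ``avoid the current minimum'' rule is one standard proof of the NBC theorem---but yours surfaces the underlying combinatorics, while the paper's is shorter and needs nothing beyond Weisner.
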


\begin{proof}
We use the fact that $\mu^k$ is given by 
\[\mu^k=(-1)^k\sum_{I\in{(\cL_{d\cA}})_k} \mu(\hat{0},I)\]
where the sum is over  rank $k$ flats.
As a consequence of Weisner's theorem \cite[Section 3.9]{StanleyBook}, we have the following equality for any $a \in I$ \cite[Theorem 3.10]{StanleyNotes},
\[\mu(\hat{0},I)=-\sum_{a\notin F \lessdot I} \mu(\hat{0}, F)\]
where $A\lessdot B$ means that $A\subset B$ and $r(A)=r(B)-1$.
Therefore, if $I$ is a rank $k$-flat, we can iterate this formula to obtain
\begin{eqnarray*}
\mu(\hat{0},I)&=&-\sum_{0\notin F_{k-1}\lessdot I} \mu(\hat{0},F_{k-1})\\
&=&+\sum_{0\notin F_{k-1}\lessdot I}\left(
\sum_{\min(F_{k-1})\notin F_{k-2}\lessdot F_{k-1}} \mu(\hat{0},F_{k-2})\right)\\
&=&(-1)^{k-1}\sum_{0\notin F_{k-1}\lessdot I} \left( \sum_{\min(F_{k-1})\notin F_{k-2}\lessdot F_{k-1}} \left(\cdots
\left( \sum_{\min(F_2)\notin F_1\lessdot F_2} (-1)\right)\right) \right).
 \end{eqnarray*}
 Therefore, we are counting initial $k$-step flags of flats $\emptyset \subsetneq F_1\subsetneq F_2\subsetneq\dots\subsetneq I$ 
 satisfying $\min(F_1)>\min(F_2)> \cdots > \min(F_{k-1})>\min(I)>0$.
 By summing over rank $k$ flats $I$, we obtain the theorem.
 \end{proof}

\begin{definition} For a matroid $\M$ of rank $r+1$ on $E$ and $k\leq r$, let the \emph{truncation} $\Trunc_k(\M)$ be the matroid on $E$ with rank function $r_k$ given by
\[r_k(I)=\min\big\{r(I),k+1\big\}.\]
\end{definition}

$\Trunc_k(\M)$ is a rank $k+1$-matroid.  If $\M$ corresponds to an $r$-dimensional projective subspace $V\subset\P^n$, $\Trunc_k(\M)$ corresponds to $V\cap W$ where $W$ is a sufficiently general $k$-dimensional subspace of $\P^n$.

\section{Intersection theory on toric varieties}

We review some notions from the theory of toric varieties.  A toric variety $X=X(\Delta)$ is defined by a rational fan $\Delta$ in $N_\R=N \otimes_\Z \R$ for a lattice $N \simeq \mathbb{Z}^n$. \excise{
$X(\Delta)$ is complete if and only if $|\Delta|=N_\R$ while
$X(\Delta)$ is smooth if and only if $\Delta$ is unimodular, that is, if every cone in $\Delta$ is generated by a subset of an integral basis for $N$.}
The $k$-dimensional torus-invariant closed subvarieties of $X$ are of the form $V(\sigma)$, as $\sigma$ varies over the codimension $k$ cones in $\Delta$.
We write $N_\sigma$ for the sublattice of $N$ generated by $\sigma \cap N$.

When $X$ is complete, the operational Chow cohomology $A^*(X)$ has a combinatorial description given by Fulton and Sturmfels \cite{FS}.   Let $\Delta^{(k)}$ denote the set of all cones in $\Delta$ of codimension $k$.  If $\tau\in\Delta^{(k+1)}$ is contained in a cone $\sigma\in\Delta^{(k)}$, let $v_{\sigma/\tau}\in N/N_\tau$ be the primitive generator of the ray $(\sigma+N_\tau)/N_\tau$\excise{in $\Star_\tau(\Delta)/N_\tau$}.

\begin{definition} A function $c:\Delta^{(k)}\rightarrow\Z$ is said to be a {\em Minkowski weight} of codimension $k$ if it satisfies the {\em balancing condition}, that is, for every $\tau\in\Delta^{(k+1)}$,
\[\sum_{\sigma\supset\tau} c(\sigma)v_{\sigma/\tau}=0\]
in $N/N_\tau$.
\end{definition}

The main result of \cite{FS} is that $A^k(X)$ is canonically isomorphic to the group of codimension $k$ Minkowski weights.   The correspondence between Chow cohomology classes and Minkowski weights is as follows: given $d\in A^k(X)$, define $c(\sigma)=\deg\big(d\cap [V(\sigma)]\big)$.  The content of the Fulton-Stumfels result is that Chow cohomology classes are determined by their values on orbit closures.  The balancing condition is a combinatorial translation of the fact that cohomology classes are constant on linear equivalence classes.

Taking the cup product and taking the degree of a zero-dimensional class can be described combinatorially. 
The cup product is given by the \emph{fan displacement rule}.  Let $c_1$, $c_2$ be Minkowski weights of codimension $k_1$, $k_2$ respectively, and $v$ be a generic (as described in \cite{FS}) vector in $N_\R$. Given $c_1,c_2$, we can take $v$ outside a finite union of proper subspaces of $N_\R$. Then
\[(c_1\cup c_2)(\gamma)=
\sum_{(\sigma_1,\sigma_2)\in\Delta^{(k_1)}\times\Delta^{(k_2)}}m_{\sigma_1,\sigma_2}^\gamma c_1(\sigma_1)c_2(\sigma_2)\]
where $m_{\sigma_1,\sigma_2}^\gamma$ are defined by
\[m_{\sigma_1,\sigma_2}^\gamma=
\begin{cases}
[N:N_{\sigma_1}+N_{\sigma_2}]&\text{if $\gamma\subset\sigma_1,\sigma_2$ and $\sigma_1\cap(\sigma_2+v)\neq \emptyset$},\\
\hspace{12mm} 0 &\text{if otherwise.}
\end{cases}
\]
The degree $\deg(c)$ of a zero-dimensional class $c\in A^n(X)$ is defined to be $c(0)$, the value of $c$ on the unique zero-dimensional cone $0$.  

There is a notion of Poincar\'{e} duality in the intersection theory of toric varieties.  Suppose $X$ is smooth and let $Y\subset X$ be a subvariety of dimension $r$. Define a function
\[
c : \Delta^{(n-r)} \to \mathbb{Z}, \qquad \sigma \mapsto \deg\big([Y] \cdot [V(\sigma)]\big).
\]
Then $c$ is a Minkowski weight, called the {\em associated cocycle} of $Y$. The class $c$ acts as a Poincar\'{e} dual to $Y$ in the following sense:

\begin{lemma}\label{assoccyc} \cite[Lemma 9.2]{KTT} \excise{Suppose $Y$ intersects the torus orbits of $X$ properly.} If $c$ is the associated cocycle of $Y$, then
\[c\cap [X]=[Y]\in A_{r}(X).\]
\end{lemma}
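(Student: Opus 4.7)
The plan is to combine Poincaré duality for the smooth complete toric variety $X$ with the Fulton--Sturmfels identification of $A^*(X)$ with Minkowski weights. Since $X$ is smooth and complete, cap product with the fundamental class gives an isomorphism $A^{n-r}(X) \xrightarrow{\sim} A_r(X)$, so there is a unique class $\eta \in A^{n-r}(X)$ satisfying $\eta \cap [X] = [Y]$. The lemma will follow as soon as I show that $\eta$ corresponds to the given Minkowski weight $c$ under the Fulton--Sturmfels bijection, since then $c \cap [X] = \eta \cap [X] = [Y]$.

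To establish the correspondence, I would compute the Minkowski weight attached to $\eta$ directly from the Fulton--Sturmfels recipe, namely $\sigma \mapsto \deg\bigl(\eta \cap [V(\sigma)]\bigr)$ for $\sigma \in \Delta^{(n-r)}$, and compare it with $c(\sigma) = \deg\bigl([Y] \cdot [V(\sigma)]\bigr)$. The point is that on the smooth variety $X$, the intersection product on $A_*(X)$ is compatible with the cap action of $A^*(X)$: since $\eta \cap [X] = [Y]$, one has
\[
[Y] \cdot [V(\sigma)] = \bigl(\eta \cap [X]\bigr) \cdot [V(\sigma)] = \eta \cap [V(\sigma)] \in A_0(X).
\]
Taking degrees yields the desired equality of weights for every $\sigma \in \Delta^{(n-r)}$, so $\eta$ and $c$ match under Fulton--Sturmfels, and the lemma follows.

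The main obstacle is to justify carefully the identity $\bigl(\eta \cap [X]\bigr) \cdot [V(\sigma)] = \eta \cap [V(\sigma)]$. This is a standard compatibility between the cap product and the intersection pairing in the Chow ring of a smooth variety, but one must invoke the Chow-theoretic intersection product in its full generality rather than as a naive geometric intersection; the excised hypothesis that $Y$ meets each orbit closure properly would be needed only if one wished to replace this abstract step by a concrete dimension count on $Y \cap V(\sigma)$. As a byproduct, the fact that $c$ automatically satisfies the balancing condition required of a Minkowski weight comes for free, once $c$ has been identified with the weight associated to the cohomology class $\eta$.
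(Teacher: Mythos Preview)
The paper does not prove this lemma; it is quoted from \cite[Lemma 9.2]{KTT} and used as a black box, so there is no in-paper argument to compare against.  Your argument is correct and is in fact the standard way to see the statement once one has the Fulton--Sturmfels description of $A^*(X)$ together with Poincar\'e duality for the smooth complete variety $X$.  The only step you flag as an obstacle, namely $(\eta\cap[X])\cdot[V(\sigma)]=\eta\cap[V(\sigma)]$, is immediate: if $\zeta\in A^{*}(X)$ is Poincar\'e dual to $[V(\sigma)]$, then both sides equal $(\eta\cup\zeta)\cap[X]$ by the associativity of cap product and the fact that Poincar\'e duality is a ring isomorphism.  No transversality hypothesis on $Y$ is needed for the equality in $A_r(X)$; as you note, properness of the intersections with torus orbits only enters if one wants the tropical/geometric interpretation of the individual values $c(\sigma)$.
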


Let $T$ be the dense torus in $X$ and $Y^\circ=Y \cap T$. The associated cocycle of $Y$ is closely related to the tropicalization of $Y^\circ$. Recall that the \emph{tropicalization} $\Trop(Y^\circ)$ is the set of vectors $v \in N_\R$ such that the initial degeneration $\text{in}_v(Y^\circ)$ in $T$ is nonempty, which is the underlying set of a rational fan of pure dimension $r$, together with the tropical multiplicity function $m$ \cite{Spe05}. Proposition 2.2 of \cite{T} states that $Y$ intersects torus orbits of $X$ properly iff $\Trop(Y^\circ)$ is a union of cones of $\Delta$.  In this case, $\Trop(Y^\circ)$ is the closure union of cones in $\Delta$ on which $c$ is positive.  Each $r$-dimensional cone $\sigma$ of $\Delta$ has tropical multiplicity $c(\sigma)$.

The equivariant Chow cohomology ring with integer coefficients $A^*_T(X)$ is naturally isomorphic to the ring of integral piecewise polynomial functions on $\Delta$, and there is a canonical map to ordinary Chow cohomology with integer coefficients
\[
\iota^*:A_T^*(X)\rightarrow A^*(X)
\]
induced by inclusions of $X$ in the finite dimensional approximations of the Borel mixed space \cite{Edidin-Graham}. For $\alpha \in A^*_T(X)$ and $c \in A^*(X)$, we write $\alpha \cup c$ to mean $\iota^* \alpha \cup c$. 

A \emph{$T$-Cartier divisor} $\alpha$ is an integral piecewise linear function on $\Delta$ viewed as an element of $A_T^1(X)$.
If $c\in A^k(X)$ is a Minkowski weight, we may compute the cup product $\iota^*\alpha\cup c$ as an element of $A^{k+1}(X)$ by using a formula that first appeared in \cite{AR}:
for $\sigma\in\Delta^{(k)}, \tau\in\Delta^{(k+1)}$, let $u_{\sigma/\tau}$ be a vector in $N_\sigma$ descending to $v_{\sigma/\tau}$ in $N/N_\tau$; then the value of $\iota^*\alpha\cup c$ on a cone $\tau\in\Delta^{(k+1)}$ is
\[
(\iota^*\alpha\cup c)(\tau)=-\sum_{\sigma\in\Delta^{(k)}|\sigma\supset\tau} \alpha_\sigma(u_{\sigma/\tau})c(\sigma)+\alpha_\tau\left(\sum_{\sigma\in\Delta^{(k)}|\sigma\supset\tau}c(\sigma)u_{\sigma/\tau}\right)
\]
where $\alpha_\sigma$ (respectively $\alpha_\tau$) is the linear function on $N_\sigma$ (on $N_\tau$) which equals $\alpha$ on $\sigma$ (on $\tau$).
A $T$-Cartier divisor $\alpha$ is said to be {\em nef} if for every codimension $1$ cone $\tau\in\Delta^{(1)}$, we have $\iota^*\alpha(\tau)\geq 0$.  This says that the cohomology class $\iota^*\alpha$ is non-negative on any $1$-dimensional orbit closure.  This notation is appropriate because a $T$-Cartier divisor $\alpha$ induces a $T$-equivariant line bundle on $X$ whose first Chern class is nef if and only if $\alpha$ is nef.

We have the following version of the Khovanskii-Teissier inequality.

\begin{lemma} \label{l:logconcave} Let $X=X(\Delta)$ be a smooth complete toric variety over an algebraically closed field.  Let $c$ be Poincare-dual to an $r$-dimensional irreducible variety $Y\subset X$ and $\alpha_1,\alpha_2$ be nef $T$-Cartier divisors on $\Delta$.  Then the numbers 
\[a_i=(\iota^* \alpha_1^{r-i}\cup \iota^* \alpha_2^{i}\cup c)\cap [X]\]
form a log-concave sequence.
\end{lemma}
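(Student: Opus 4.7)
The plan is to reduce this statement to the classical Khovanskii--Teissier inequality for nef line bundles on an irreducible variety of dimension $r$, which is the ``classical proof of Khovanskii-Teissier'' alluded to in the introduction. All the work done earlier in the section is organized precisely so that this reduction becomes nearly formal.

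First, I would use Lemma~\ref{assoccyc} to replace $c \cap [X]$ with $[Y]$ in $A_r(X)$. Combined with the projection formula this gives
\[
a_i = \deg\bigl(\iota^*\alpha_1^{r-i} \cup \iota^*\alpha_2^i \cap [Y]\bigr) = \bigl(c_1(L_1|_Y)^{r-i} \cdot c_1(L_2|_Y)^i\bigr)_Y,
\]
where $L_j$ denotes the $T$-equivariant line bundle on $X$ whose first Chern class is $\iota^*\alpha_j$. As the paragraph preceding the lemma explains, the $T$-Cartier divisor $\alpha_j$ is nef precisely when the line bundle $L_j$ is nef in the classical sense, so the restrictions $L_j|_Y$ are nef line bundles on the irreducible $r$-dimensional variety $Y$.

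Next, I would invoke the classical Khovanskii--Teissier inequality: for two nef line bundles $M_1, M_2$ on an irreducible projective variety of dimension $r$, the sequence of mixed intersection numbers $\bigl(M_1^{r-i} \cdot M_2^i\bigr)$ is log-concave; see \cite[Example 1.6.4]{Lazarsfeld}. Setting $M_j = L_j|_Y$ yields the desired log-concavity of $a_i$.

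The main subtlety, and where most of the work hides, is that the cited form of Khovanskii--Teissier is usually developed over $\mathbb{C}$, whereas the hypothesis here is only that $\k$ is algebraically closed, and $X$ is assumed complete but not necessarily projective. The classical proof proceeds by cutting with general members of an ample linear system to reduce to the Hodge index theorem on surfaces, which holds in any characteristic; if $X$ happens to be non-projective one may first apply Chow's lemma (or work with a normalization/alteration of $Y$) to land in a projective setting, preserving the relevant intersection numbers by the projection formula, and an approximation argument replacing nef classes by ample ones then completes the reduction.
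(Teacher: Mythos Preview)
Your proof is correct and follows essentially the same approach as the paper: both reduce to the classical Khovanskii--Teissier inequality \cite[Example~1.6.4]{Lazarsfeld} by passing from $c\cap[X]$ to $[Y]$ and identifying $a_i$ as intersection numbers of the nef line bundles $L_1,L_2$ restricted to the irreducible variety $Y$. Your final paragraph on the characteristic and projectivity subtleties is more careful than the paper, which simply cites Lazarsfeld without further comment.
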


\begin{proof}
The piecewise linear functions $\alpha_1$, $\alpha_2$ induce $T$-equivariant line bundles on $L_1,L_2$ on $X$.  Because every curve in $X$ is algebraically equivalent to a union of $1$-strata, the non-negativity condition on $\alpha_j$ ensures that $c_1(L_j)$ is nef \cite[Theorem 6.3.12]{CLS}. Now
\[(\iota^*\alpha_1^{r-i}\cup \iota^*\alpha_2^{i}\cup c)\cap [X]=(\iota^*\alpha_1^{r-i}\cup \iota^*\alpha_2^{i})\cap [Y]\]
and the result follows from the classical Khovanskii-Teissier inequality \cite[Example 1.6.4]{Lazarsfeld}.
\end{proof}

We do not know a purely combinatorial condition on the Minkowski weight $c$ for the above lemma to hold.

\section{Bergman fans}

Let $V$ be an $r$-dimensional projective subspace of $\mathbb{P}^n$ over the field $\mathbb{C}$ of complex numbers. The \emph{amoeba} of $V$ is the set of all vectors of the form
\[
\big(\log|x_1|,\log|x_2|,\ldots,\log|x_n|\big) \in \mathbb{R}^n
\]
where $(x_1,\ldots,x_n)$ runs over all points of $V$ in the torus $(\mathbb{C}^*)^n$. The asymptotic behavior of the amoeba is given by an $r$-dimensional polyhedral fan in $\mathbb{R}^n$ called the \emph{Bergman fan} of $V$. The Bergman fan of a projective subspace $V$ depends only on the associated matroid. More generally, one can associate to an arbitrary matroid $\M$ its Bergman fan which reflects combinatorial properties of $\M$ \cite[Section 9.3]{StuSolving}.

We introduce the Bergman fans of matroids following the exposition of Katz-Payne \cite{KPReal}.
Let $\M$ be a rank $r+1$ matroid on the set $E=\{0,\ldots,n\}$.  Let $N$ be the lattice
\[
N = \Z^E / \< e_0 + \cdots + e_n \>.
\]
We pick coordinates on $N$ in such a way that $e_1,\dots,e_n$ are the standard unit basis vectors and $e_0=(-1,\dots,-1)$.
The \emph{matroid fan} $\Delta_\M$ is a simplicial fan in $N_\R$ that encodes the lattice of flats of $\M$. Ardila and Klivans introduced this fan in \cite{AKBergman} and called it the fine subdivision of the Bergman fan of the matroid; the fan is defined as follows. For a subset $I \subset E$, let $e_I$ be the vector 
\[
e_I = \sum_{e_i \in I} e_i
\]
in $N_\R$. The rays of the matroid fan $\Delta_\M$ are $\R_{\ge 0}e_F$ for proper flats $F$ of the matroid.  In general, the $k$-dimensional cones of the matroid fan, $\sigma_\cf$ are the non-negative spans of $\{ e_{F_1}, \ldots, e_{F_k} \}$ for $k$-step flags of proper flats $\cf=\{\emptyset \subsetneq F_1 \subsetneq \cdots \subsetneq F_k\}$.  Since each cone $\sigma_\cf$ in $\Delta_\M$ is spanned by a subset of a basis for the lattice $N$, the toric variety $X(\Delta_\M)$ is smooth.  Furthermore, since every flag of flats in a matroid can be extended to a maximal flag of proper flats of length $r$, the matroid fan $\Delta_\M$ is of pure dimension $r$.

\begin{example} \label{uniform matroid}
Let $\U_n$ be the uniform matroid on $\{0, \ldots, n\}$, the matroid in which every subset is a flat.  Then the matroid fan $\Delta_{\U_n}$ in $N=\R^{n+1}/(1, \ldots, 1)$ is the first barycentric subdivision of the fan corresponding to $\P^n$, and $X(\Delta_{\U_n})$ is the toric variety obtained from $\P^n$ by a sequence of blowups
\[
X(\Delta_{\U_n}) = X_{n-1} \rightarrow \cdots \rightarrow X_1 \rightarrow X_0 = \P^n,
\]
where $X_{i+1} \rightarrow X_i$ is the blowup along the strict transforms of the $i$-dimensional torus-invariant subvarieties of $\P^n$. The Cremona transformation
\[
\text{Crem} : \mathbb{P}^n \dashrightarrow \mathbb{P}^n, \qquad (z_0:\cdots:z_n) \mapsto  (z_0^{-1}:\cdots:z_n^{-1}) 
\]
induces multiplication by $-1$ on $N$. $\text{Crem}$ extends to an automorphism
\[
\text{Crem} : X(\Delta_{\U_n}) \rightarrow X(\Delta_{\U_n})
\]
of $X(\Delta_{\U_n})$ since $\Delta_{\U_n}$ is invariant under the multiplication by $-1$.
\end{example}

Note that the labeling $E = \{e_0, \ldots, e_n\}$ of the underlying set of the matroid $\M$ induces an inclusion of the matroid fan $\Delta_\M$ as a subfan of $\Delta_{\U_n}$.  Furthermore, the dense torus $T$ in $X(\Delta_\M)$ is naturally identified with the dense torus in $\P^n$.

Let $\mathcal{A}$ be an arrangement of hyperplanes on an $r$-dimensional projective subspace $V \subset \mathbb{P}^n$ realizing $\M$. We can identify $\Delta_\M$ as the tropicalization of the complement $V^\circ :=V \setminus \mathcal{A} = V \cap T$.

\begin{theorem} \cite[Section 9.3]{StuSolving} 
The tropicalization of $V^\circ$ is $\Delta_\M$.
\end{theorem}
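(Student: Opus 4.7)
The plan is to establish the equality $\Trop(V^\circ) = \Delta_\M$ by passing through a common characterization in terms of level sets. Since $V \subset \P^n$ is a linear subspace, the ideal $I(V)$ is generated by the linear forms $\ell_C = \sum_{i \in C} a_i z_i$ indexed by the circuits $C$ of $\M$, and a classical result of Sturmfels asserts that these circuit forms constitute a tropical basis of $I(V)$. Consequently, $v \in \Trop(V^\circ)$ if and only if, for every circuit $C$ of $\M$, the minimum $\min_{i \in C} v_i$ is attained at least twice.

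Next, I would translate the circuit inequality into a statement about upper level sets. For $c \in \R$, let $U_c = \{i \in E : v_i \geq c\}$. The key claim is that $v \in \Trop(V^\circ)$ if and only if $U_c$ is a flat of $\M$ for every $c$. Indeed, if some $U_c$ fails to be a flat, then by the standard characterization of flats via circuits there exists a circuit $C$ with $|C \setminus U_c| = 1$; its unique element $e$ satisfies $v_e < c \leq v_j$ for all $j \in C \setminus \{e\}$, so the minimum of $v|_C$ is attained only at $e$. Conversely, if some circuit $C$ has its minimum attained uniquely at $e$, then picking $c$ strictly between $v_e$ and the next smallest value of $v$ on $C$ produces a non-flat $U_c$.

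For the last step, I would read a flag of flats directly off $v$. Let $a_1 < a_2 < \cdots < a_k$ be the distinct coordinate values of $v$, and set $F_i := U_{a_i}$. Then $F_1 = E \supsetneq F_2 \supsetneq \cdots \supsetneq F_k \neq \emptyset$ is a chain of flats, all proper for $i \geq 2$. A direct computation in $\R^E$ yields
\[
v = a_1(e_0 + \cdots + e_n) + \sum_{i=2}^{k}(a_i - a_{i-1})\, e_{F_i},
\]
so modulo the lattice relation $e_0 + \cdots + e_n = 0$ the vector $v$ is a nonnegative combination of the vectors $e_{F_i}$ for a flag of proper flats, placing $v$ in the cone $\sigma_{\cf} \subset \Delta_\M$. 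Conversely, any point of $\sigma_{\cf}$ for a flag $\cf = \{G_1 \subsetneq \cdots \subsetneq G_m\}$ has all its upper level sets among $\{G_1,\ldots,G_m,E,\emptyset\}$, hence flats, which by the previous step places it in $\Trop(V^\circ)$.

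The main technical input is the tropical basis statement in the first paragraph, since the circuit forms a priori cut out only a tropical prevariety containing $\Trop(V^\circ)$; one can either invoke Sturmfels's direct argument for linear ideals or appeal to the structure theorem for tropical varieties, after which the remaining combinatorial steps are straightforward. The sign conventions are pinned down by the sanity check that $e_F$ lies in $\Trop(V^\circ)$ precisely when $F$ is a flat of $\M$, which is immediate from the circuit minimum condition.
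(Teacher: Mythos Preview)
The paper does not supply its own proof of this statement; it is quoted as a known result from \cite[Section~9.3]{StuSolving}, with the fine fan structure attributed to Ardila--Klivans.  Your argument is correct and is essentially the standard proof found in those references: the circuit forms give a tropical basis for the linear ideal, the resulting ``min attained twice on every circuit'' condition is equivalent to every upper level set of a representative of $v$ being a flat, and sorting the coordinate values reads off the flag of proper flats whose cone $\sigma_\cf$ contains $v$.  The one genuine input, as you correctly flag, is the tropical-basis assertion; once that is in hand the remaining steps are elementary matroid combinatorics, and your sanity check that $e_F\in\Trop(V^\circ)$ exactly when $F$ is a flat confirms the sign conventions match those of the paper.
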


Let $\widetilde{V}$ denote the closure of $V^\circ$ in $X=X(\Delta_{\U_n})$.
Since the underlying set of $\Delta_\M$ is a union of cones of $\Delta_{\U_n}$, $\widetilde{V}$ intersects torus orbits of $X$ properly.  Consequently, $\Delta_\M$ considered as a Minkowski weight of $X$ is the associated cocycle of $\widetilde{V}$.



\section{Intersection theory computations}

Let $\alpha=\min\{0,x_1,\dots,x_n\}$ be a piecewise linear function on $\R^n$.  Note that $\alpha$ is linear on each cone of $\Delta_{\U_n}$. It is nef because $\alpha$ takes the value $1$ or $0$ on each $1$-dimensional orbit closure of $X(\Delta_{\U_n})$.  
In fact, $\alpha$ corresponds to a line bundle $p^*\O(1)$ on $X(\Delta_{\U_n})$ where $p$ is the blowup $p : X(\Delta_{\U_n}) \to \mathbb{P}^n$.  We have the following lemma which is to be expected from our geometric description of truncation:

\begin{lemma} \label{l:ohone} Let $\M$ be a rank $r+1$ matroid on $E=\{0,\dots,n\}$.  Then,
\[\alpha\cup\Delta_\M=\Delta_{\Trunc_{r-1}(\M)}\]
in $A^*\big(X(\Delta_{\U_n})\big)$
\end{lemma}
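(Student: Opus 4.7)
The plan is to verify the identity of Minkowski weights directly using the Allermann--Rau cup-product formula from \cite{AR}. Both sides are Minkowski weights of dimension $r-1$ on $X(\Delta_{\U_n})$, so it suffices to check that they agree on every $(r-1)$-dimensional cone $\tau$ of $\Delta_{\U_n}$. Such a $\tau$ corresponds to a flag $\cg = \{G_1 \subsetneq \cdots \subsetneq G_{r-1}\}$ of proper nonempty subsets of $E$. The proper nonempty flats of $\Trunc_{r-1}(\M)$ are precisely the flats of $\M$ of rank at most $r-1$, since every rank-$r$ flat of $\M$ becomes nonproper (equal to $E$) after truncation. Hence $\Delta_{\Trunc_{r-1}(\M)}(\tau) = 1$ exactly when $\cg$ is a flag of $\M$-flats of ranks $1, 2, \dots, r-1$, and vanishes otherwise.

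For the left-hand side I would apply the Allermann--Rau formula with $c = \Delta_\M$. The $r$-dimensional cones $\sigma \supset \tau$ contributing are those obtained by inserting one flat $F$ into $\cg$ to produce a maximal flag of flats of $\M$; this forces $\cg$ itself to be a flag of flats of $\M$ whose ranks omit a single value $k \in \{1, \dots, r\}$. Let $A \subsetneq B$ denote the flats of $\cg$ bracketing the gap, with the conventions $A = \emptyset$ if $k = 1$ and $B = E$ if $k = r$. The admissible $F$ are the atoms of the rank-$2$ interval $[A, B]$ in the lattice of flats, hence correspond bijectively to the parallel classes $P$ of $B \setminus A$ via $F = A \sqcup P$. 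Smoothness of $X(\Delta_{\U_n})$ permits taking $u_{\sigma/\tau} = e_F$, and using $\sum_P e_P = e_{B \setminus A} = e_B - e_A$ in $N$, the balanced sum simplifies to
\[
w := \sum_F e_F = (N_k - 1)\, e_A + e_B,
\]
where $N_k$ denotes the number of admissible $F$.

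Substituting into the Allermann--Rau formula and using the explicit values $\alpha(e_I) = -1$ for $0 \in I \subsetneq E$, $\alpha(e_I) = 0$ for $0 \notin I$, and $\alpha(e_E) = 0$ (because $e_E = 0$ in $N$), a short case analysis on the location of $0 \in E$ among $A$, $B \setminus A$, and $E \setminus B$ shows that
\[
(\alpha \cup \Delta_\M)(\tau) \;=\; -\sum_F \alpha(e_F) + \alpha_\tau(w)
\]
equals $1$ when $k = r$ and $0$ when $k < r$, matching $\Delta_{\Trunc_{r-1}(\M)}(\tau)$. The main (mild) subtlety is the boundary case $B = E$: the otherwise-uniform formula $\alpha(e_I) = -[0 \in I]$ breaks at $I = E$ precisely because $e_E$ vanishes in $N$, and this correction is exactly what distinguishes $k = r$ (producing the value $1$) from $k < r$ (where the two terms in the Allermann--Rau formula cancel).
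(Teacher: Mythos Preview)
Your proposal is correct and follows essentially the same route as the paper's proof: both apply the Allermann--Rau formula to $\Delta_\M$, identify the $(r-1)$-dimensional cones with nonzero weight as flags of $\M$-flats missing a single rank $k$, compute the balanced sum $\sum_F e_F = (N_k-1)e_A + e_B$ (the paper writes $e_{F_{j+1}} + (f-1)e_{F_j}$), and run the same case analysis on the position of $0$ relative to $A$ and $B$ to see that the weight is $1$ precisely when the gap is at the top. The only cosmetic differences are notational (your $A,B,N_k,k$ versus the paper's $F_j,F_{j+1},f,j$) and that you phrase the support observation by starting from arbitrary cones of $\Delta_{\U_n}$ rather than immediately restricting to subcones of $\Delta_\M$.
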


\begin{proof}
The Minkowski weight $\alpha\cup\Delta_\M$ is supported on codimension $1$ cones in $\Delta_\M$.  They correspond to $(r-1)$-step flags of proper flats
\[
\cf=\{\emptyset \subsetneq F_1\subsetneq F_2\subsetneq \dots\subsetneq F_{r-1}\}.
\]
The cone $\sigma_\cf$ is contained in $\sigma_\cg$ iff the flag $\cg$ is obtained from $\cf$ by inserting a single flat.  Write this relation as $\cg\gtrdot\cf$.  This flat must be inserted between two flags $F_j\subset F_{j+1}$ where $r(F_{j+1})=r(F_j)+2$. Setting $F_r := E$, there is a unique choice of $j$ where this happens.  Suppose $\cg$ is obtained from inserting a flat $F$ between $F_j\subset F_{j+1}$.
 Let $u_{\cg/\cf}$ be an integer vector in $\sigma_\cg$ that generates the image of $\sigma_\cg$ in $N/N_{\sigma_\cf}$. We may choose $u_{\cg/\cf}$ to be $e_{F}$. The value of $\alpha \cup \Delta_\M$ on $\sigma_\cf$ is given by
\[
(\alpha \cup \Delta_\M)(\sigma_\cf)=-\sum_{\cg\gtrdot \cf} \alpha_{\mathcal{G}}(u_{\cg/\cf})+\alpha_{\mathcal{F}}\Big(\sum_{\cg\gtrdot \cf} u_{\cg/\cf}\Big)
\]
where $\alpha_{\mathcal{G}}$ (respectively $\alpha_\mathcal{F}$) is the linear function on $N_{\sigma_\mathcal{G}}$ (on $N_{\sigma_\mathcal{F}}$) which equals $\alpha$ on $\sigma_\mathcal{G}$ (on $\sigma_\mathcal{F}$).

We now compute the right-hand side.  In any case 
\[
\alpha_{\mathcal{G}}(u_{\cg/\cf})=\begin{cases} -1&\ \text{if}\ 0\in F\\ 0&\ \text{if otherwise}. \end{cases}
\]
Let $f$ be the number of flats that can be inserted between $F_j$ and $F_{j+1}$.  Because every element of $F_{j+1}\setminus F_j$ is contained in exactly one flat $F$, we have
\begin{eqnarray*}
\sum_{\cg\gtrdot\cf} \alpha_{\mathcal{G}}(u_{\cg/\cf})&=&\begin{cases}
-f&\ \text{if}\ 0\in F_j\\
-1&\ \text{if}\ 0\in F_{j+1}\setminus F_j\\
0&\ \text{if otherwise}.
\end{cases}
\end{eqnarray*}
and
\begin{eqnarray*}
\sum_{\cg \gtrdot \cf} u_{\cg/\cf}&=&e_{F_{j+1}}+(f-1)e_{F_j}.\\
\end{eqnarray*}
It follows from the latter equality that if $j<r-1$, then 
\begin{eqnarray*}
\alpha_{\mathcal{F}}\Big(\sum_{\cg \gtrdot \cf} u_{\cg/\cf}\Big)&=&
\begin{cases}-f&\ \text{if}\ 0\in F_j\\ 
-1&\ \text{if}\ 0\in F_{j+1}\setminus F_j\\
0&\ \text{if otherwise}.\end{cases}
\end{eqnarray*}
If $j=r-1$, then $e_{F_{j+1}}=e_E=0$, and we have 
\begin{eqnarray*}
\alpha_{\mathcal{F}}\Big(\sum_{\cg \gtrdot \cf} u_{\cg/\cf}\Big)&=&
\begin{cases}-f+1&\ \text{if}\ 0\in F_j\\ 
0&\ \text{if otherwise}.\end{cases}
\end{eqnarray*}
Putting everything together, we have
\[
(\alpha \cup \Delta_\M)(\sigma_\cf)=\begin{cases}1&\ \text{if}\ j=r-1\\
0&\ \text{if otherwise}.\end{cases}
\]
Therefore $\alpha \cup \Delta_\M$ is non-zero on exactly the top-dimensional cones in $\Delta_{\Trunc_{r-1}(\M)} $.
\end{proof}

The next proposition relates coefficients of the reduced characteristic polynomial to certain intersection products on $X(\Delta_{\U_n})$. If $\Delta$ is a weighted fan considered as a Minkowski weight, then $\Crem^*(\Delta)$ is the weighted fan whose cones are $-\sigma$ for each $\sigma\in\Delta$ where the weight of $-\sigma$ in $\Crem^*(\Delta)$ is equal to that of $\sigma$ in $\Delta$.

\begin{proposition}\label{p:computation}
The coefficients of $\overline{\chi}_\M(q)$ are given by 
\[
\mu^k=\deg\big(\Delta_{\Trunc_{n-k}(\U_n)} \cup \Crem^*(\Delta_{\Trunc_k(\M)})\big).
\]
\end{proposition}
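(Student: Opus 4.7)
The plan is to compute the right-hand side combinatorially by applying the Fulton--Sturmfels fan displacement rule from Section~3, and to match the resulting count against the formula $\mu^k=|S_k|$ furnished by Proposition~\ref{p:coeff}.

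The first step is to identify the cones carrying the two Minkowski weights. The $(n-k)$-dimensional cones of $\Delta_{\Trunc_{n-k}(\U_n)}$ are $\sigma_\cf=\R_{\ge 0}\langle e_{F_1},\ldots,e_{F_{n-k}}\rangle$, where $\cf:\emptyset\subsetneq F_1\subsetneq\cdots\subsetneq F_{n-k}$ is an initial flag in $\U_n$ (equivalently, a chain of subsets of $E$ with $|F_i|=i$), each with weight $1$. The $k$-dimensional cones of $\Crem^*(\Delta_{\Trunc_k(\M)})$ are the negatives $-\sigma_\cg=\R_{\ge 0}\langle -e_{G_1},\ldots,-e_{G_k}\rangle$, where $\cg:\emptyset\subsetneq G_1\subsetneq\cdots\subsetneq G_k$ is an initial flag of flats in $\M$, each with weight $1$. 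Since the two fans have complementary dimensions in $X(\Delta_{\U_n})$, the displacement rule expresses the degree as
\[
\sum_{(\cf,\cg)}[N:N_{\sigma_\cf}+N_{\sigma_\cg}],
\]
where the sum ranges over pairs with $\sigma_\cf\cap(-\sigma_\cg+v)\ne\emptyset$ for some chosen generic $v\in N_\R$.

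The second step is to choose $v$ cleverly and identify the contributing pairs. I would take $v$ to be a small generic perturbation of a vector $v_1 e_1+\cdots+v_n e_n$ with strictly positive coordinates satisfying $v_1\gg v_2\gg\cdots\gg v_n>0$. The intersection condition becomes the existence of an expansion $v=\sum_i a_i e_{F_i}+\sum_j b_j e_{G_j}$ with all $a_i,b_j\ge 0$, after substituting $e_0=-e_1-\cdots-e_n$. A case analysis on which of the $F_i$ and $G_j$ contain $0$ should show that nonnegativity forces $0\notin G_j$ for every $j$, and that the chamber chosen by $v$ selects exactly the descending condition $\min(G_1)>\cdots>\min(G_k)>0$ on $\cg$, while uniquely determining the companion flag $\cf$ as an initial flag in the complementary coordinates. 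This would set up a bijection between contributing pairs and $S_k$.

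The third step is to verify that each contributing pair has lattice index $1$. Under the bijection of Step~2, the combined generators $e_{F_1},\ldots,e_{F_{n-k}},e_{G_1},\ldots,e_{G_k}$ expand in the basis $e_1,\ldots,e_n$ as a $\{0,1\}$-matrix; after a reordering compatible with the two chains, it becomes upper triangular with unit diagonal, hence unimodular. Combining this with Step~2 gives $\deg=|S_k|=\mu^k$. The main obstacle is the case analysis in Step~2: showing that the sign constraints imposed by the substitution $e_0=-e_1-\cdots-e_n$ collapse precisely to the descending condition on $\cg$, and simultaneously that the chamber of $v$ picks out a single $\cf$ for each $\cg\in S_k$. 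The careful choice of $v$ with widely separated positive coordinates is what should make this bookkeeping tractable.
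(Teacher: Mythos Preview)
Your outline is the paper's own strategy: apply the fan displacement rule, choose a generic vector with totally ordered positive coordinates, show the contributing pairs are in bijection with $S_k$, and check the lattice index is $1$. Two points deserve correction or comparison.

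First, your ordering of $v$ is reversed. With $v_1\gg\cdots\gg v_n>0$, the minimum of $v_l-x_l$ over a stratum $G_j\setminus G_{j-1}$ (where $x_l$ is constant) is attained at the \emph{largest} index $l$, so the case analysis produces a condition on $\max(G_j\setminus G_{j-1})$ rather than on $\min(G_j)$; this does not match $S_k$ and hence does not hook into Proposition~\ref{p:coeff}. The paper takes $0<v_1<v_2<\cdots<v_n$ (no ``$\gg$'' needed), which makes the stratum minimum occur at $l_j:=\min(G_j\setminus G_{j-1})$ and forces $l_1>l_2>\cdots>l_k>0$, equivalent to the descending condition defining $S_k$.

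Second, the paper avoids the double enumeration over $(\cf,\cg)$ that you set up in Step~2. Instead of fixing a uniform-fan cone $\sigma_\cf$ and solving $v=\sum a_ie_{F_i}+\sum b_je_{G_j}$, it uses the description of $|\Delta_{\Trunc_{n-k}(\U_n)}|$ as the locus where the minimum of $\{0,x_1,\ldots,x_n\}$ is achieved at least $k+1$ times, fixes only the $\M$-flag $\cg$, and asks when $-x+v$ lies in that locus for $x\in\sigma_\cg$. This collapses your case analysis to a single clean argument and simultaneously shows uniqueness of the companion uniform flag. Your triangular/unimodular argument for the index $[N:N_{\sigma_\cf}+N_{\sigma_\cg}]=1$ is exactly what the paper does at the end.
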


\begin{proof}
We use the Fulton-Sturmfels fan displacement rule. The right-hand side is the sum of the structure constants $m^0_{\sigma,\tau}$ for top-dimensional cones $\sigma \in \Delta_{\Trunc_{n-k}(\U_n)}$ and  $\tau \in\Crem^*(\Delta_{\Trunc_k(\M)})$.  We will show that $m^0_{\sigma,\tau}$ is always equal to $0$ or $1$ and the set of pairs $(\sigma,\tau)$ with $m^0_{\sigma,\tau}=1$ can be put in bijective correspondence with $S_k$, the set of  initial, descending $k$-step flags of flats.

Top-dimensional cones in $\Delta_{\Trunc_{n-k}(\U_n)}$ are of the form $\sigma_\ci$ for a flag 
\[
\ci=\{\emptyset \subsetneq I_1 \subsetneq \cdots \subsetneq I_{n-k}\},\qquad |I_j|=j.
\]
Taking union of $\sigma_\ci$ over all $\ci$ as above, we see that the underlying set of $\Delta_{\Trunc_{n-k}(\U_n)}$ is exactly the set of points where the minimum of $\{0,x_1,\dots,x_n\}$ is achieved at least $k+1$ times. The acheived minimum on $\sigma_\ci$ is $0$ iff $I_{n-k}$ does not contain $0$. In this case, 
\[
N_{\sigma_\ci} = \text{Span}\{e_i \mid i \in I_{n-k}\}.
\]
Top-dimensional cones in $\Crem^*(\Delta_{\Trunc_k(\M)})$ are of the form $-\sigma_\cf$ for a flag 
\[
\cf=\{\emptyset \subsetneq F_1\subsetneq\dots\subsetneq F_k\}, \qquad r_k(F_i)=i.
\]
Fix a generic vector $v=(v_1,v_2,\dots,v_n)\in\R^n$ with $0<v_1<v_2<\dots<v_n$. The claimed equality follows from Proposition \ref{p:coeff} and the lemma below. 

\begin{lemma}
The following are equivalent.
\begin{enumerate}
\item $|\Delta_{\Trunc_{n-k}(\U_n)}| \cap (-\sigma_\cf+v)$ is a singleton set.
\item $|\Delta_{\Trunc_{n-k}(\U_n)}| \cap (-\sigma_\cf+v)$ is nonempty.
\item $\min(F_1)>\min(F_2)>\dots>\min(F_k)>0$.
\end{enumerate}
If one of the above holds, then
\[
N_{\sigma_\ci}+N_{-\sigma_\cf}=N
\]
for the unique top-dimensional cone $\sigma_{\ci}$ of $\Delta_{\Trunc_{n-k}(\U_n)}$ intersecting $-\sigma_\cf+v$.
\end{lemma}

\begin{proof}
Suppose $(-x+v)$ is an element of $|\Delta_{\Trunc_{n-k}(\U_n)}| \cap (-\sigma_\cf+v)$. We write 
\[
x=t_1e_{F_1}+\dots+t_ke_{F_k}, \quad t_i \ge 0.
\]
Let $s \in E$ be the element with $0\in F_s\setminus F_{s-1}$, where we set $F_0 =\emptyset$, $F_{k+1} = E$. Then for any $l \in F_j \setminus F_{j-1}$,
\[
x_l=\begin{cases}
t_j+\dots+t_{s-1} &\ \text{if}\ j<s\\
\hspace{10mm} 0 &\ \text{if}\ j=s\\
-t_s-\dots-t_{j-1}&\ \text{if}\ j>s.
\end{cases}
\]
Note that the minimum of $v_l-x_l$ as $l$ ranges among elements $F_j\setminus F_{j-1}$ is achieved uniquely by $l_j :=\min(F_j\setminus F_{j-1})$ by our choice of $v$. Therefore the minimum of $\{0,v_1-x_1,\dots,v_n-x_n\}$ can be achieved by at most one element from each set $F_j\setminus F_{j-1}$.  For the minimum to be achieved $k+1$ times, it must be achieved by $0$. It follows that $0 \notin F_k$ and $x_l=v_{l_j}$ for $l \in F_j \setminus F_{j-1}$. Since
\[
x_{l_1}=v_{l_1}>x_{l_2}=v_{l_2}>\cdots>x_{l_k}=v_{l_k},
\]
we must have $l_1>l_2>\cdots>l_k>0$. In other words,
\[
\min(F_1)>\min(F_2)>\dots>\min(F_k)>0.
\]

Conversely, suppose $l_1>l_2>\cdots>l_k>0$ so that $v_{l_j}$ is an increasing sequence. Let $x=(x_1,\ldots,x_n) \in \mathbb{R}^n$ be the point obtained by setting $x_l=v_{l_j}$ for $l \in F_j \setminus F_{j-1}$. Then $x$ is contained in $\sigma_{\cf}$ because
\[
x=(v_{l_1}-v_{l_2})e_{F_1}+\cdots+(v_{l_{k-1}}-v_{k})e_{F_{k-1}}+(v_{l_k})e_{F_k}.
\]
The above analysis shows that this $x$ is the unique element of $|\Delta_{\Trunc_{n-k}(\U_n)}| \cap (-\sigma_\cf+v)$. We have shown that the conditions (1), (2), and (3) are equivalent.

Let $\sigma_\ci$ be the cone of $\Delta_{\Trunc_{n-k}(\U_n)}$ corresponding to the flag
\[
\ci=\{\emptyset \subsetneq I_1 \subsetneq \cdots \subsetneq I_{n-k}\},\qquad |I_j|=j.
\]
If $\sigma_\ci$ intersects $-\sigma_\cf+v$, then the above argument shows that $0 \notin I_{n-k}$ and
\[
\{l_1,\ldots,l_k\} \cup I_{n-k} = \{1,\ldots, n\}.
\]
The span of $-e_{F_1},\ldots,-e_{F_k}$ in $N/N_{\sigma_\ci}$ is generated by
\[
-e_{l_1},-e_{l_1}-e_{l_2},\dots,-e_{l_1}-e_{l_2}-\dots-e_{l_k}.
\]
This gives all of $N/N_{\sigma_\ci}$, and hence $N_{\sigma_\ci}+N_{-\sigma_\cf}=N$.
\end{proof}
\end{proof}

\section{Log-Concavity}

In this section we establish the log concavity of the numbers $\mu^k$ by interpreting them as intersection numbers.

\begin{lemma}\label{l:formula} 
\[
\mu^{k}=\deg\big(\alpha^{r-k} \cup (\Crem^*\alpha)^{k} \cup \Delta_\M \big).
\]
\end{lemma}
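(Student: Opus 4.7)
The plan is to reduce Lemma \ref{l:formula} to Proposition \ref{p:computation} by repeated application of Lemma \ref{l:ohone}, together with the fact that $\Crem$ is an involutive automorphism of $X(\Delta_{\U_n})$.

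First, I would iterate Lemma \ref{l:ohone} starting from $\M$. Since $\Trunc_{j-1}(\Trunc_j(\M)) = \Trunc_{j-1}(\M)$, each cup with $\alpha$ lowers the truncation index by one, and after $r-k$ applications
\[
\alpha^{r-k} \cup \Delta_\M \;=\; \Delta_{\Trunc_k(\M)} \quad \text{in } A^*\bigl(X(\Delta_{\U_n})\bigr).
\]
Next, I would apply Lemma \ref{l:ohone} with $\M$ replaced by $\U_n$. The Minkowski weight $\Delta_{\U_n}$, being supported on every top-dimensional cone of $\Delta_{\U_n}$ with weight $1$, represents the unit $1 \in A^0\bigl(X(\Delta_{\U_n})\bigr)$, so iterated cupping gives
\[
\alpha^k \;=\; \alpha^k \cup \Delta_{\U_n} \;=\; \Delta_{\Trunc_{n-k}(\U_n)}.
\]
Since $\Crem^*$ is a ring endomorphism of $A^*\bigl(X(\Delta_{\U_n})\bigr)$, pulling back yields $(\Crem^*\alpha)^k = \Crem^*(\alpha^k) = \Crem^*\bigl(\Delta_{\Trunc_{n-k}(\U_n)}\bigr)$.

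Finally, I would transfer the $\Crem^*$ across the cup product. Because $\Crem$ is an automorphism with $\Crem^2 = \mathrm{id}$, its pullback preserves degrees on top-dimensional classes, and applying $\Crem^*$ to both factors of a cup product gives the identity $\deg\bigl(\Crem^*(A)\cup B\bigr) = \deg\bigl(A\cup \Crem^*(B)\bigr)$. Combining everything,
\begin{align*}
\deg\bigl(\alpha^{r-k} \cup (\Crem^*\alpha)^k \cup \Delta_\M\bigr)
&= \deg\bigl(\Crem^*(\Delta_{\Trunc_{n-k}(\U_n)}) \cup \Delta_{\Trunc_k(\M)}\bigr) \\
&= \deg\bigl(\Delta_{\Trunc_{n-k}(\U_n)} \cup \Crem^*(\Delta_{\Trunc_k(\M)})\bigr) \\
&= \mu^k,
\end{align*}
the last equality being Proposition \ref{p:computation}. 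The only point requiring a moment of care is the identification of $\Delta_{\U_n}$ with the unit of the cohomology ring so that Lemma \ref{l:ohone} can be invoked to compute pure powers of $\alpha$; once this is in hand, the argument is a purely formal rearrangement of cup products, with no new geometric input needed beyond the two preceding results.
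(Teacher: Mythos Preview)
Your proof is correct and follows essentially the same approach as the paper's: both reduce the formula to Proposition~\ref{p:computation} via iterated application of Lemma~\ref{l:ohone}, the identification of $\Delta_{\U_n}$ with the unit of $A^*\bigl(X(\Delta_{\U_n})\bigr)$, and the involutivity of $\Crem^*$ to transfer the pullback across the cup product. The only cosmetic difference is direction: the paper starts from Proposition~\ref{p:computation} and unwinds toward the cup-product expression, whereas you start from the cup product and arrive at Proposition~\ref{p:computation}.
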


\begin{proof}
We observe that $\Delta_{\U_n}=\Crem^*(\Delta_{\U_n})$ is the associated cocycle of  $X(\Delta_{\U_n})$. By Proposition \ref{p:computation} and Lemma \ref{l:ohone},
\begin{eqnarray*}
\mu^k&=&\deg\big(\Delta_{\Trunc_{n-k}(\U_n)} \cup \Crem^*(\Delta_{\Trunc_k(\M)})\big)\\
&=&\deg\big(\Crem^*(\Delta_{\Trunc_{n-k}(\U_n)}) \cup \Delta_{\Trunc_k(\M)}\big)\\
&=&\deg\big( \Crem^*(\alpha^k \cup \Delta_{\U_n}) \cup (\alpha^{r-k} \cup \Delta_{\M})\big)\\
&=& \deg\big(\alpha^{r-k} \cup (\Crem^*\alpha)^{k} \cup \Delta_\M \big).
\end{eqnarray*}
\end{proof}

\begin{theorem}\label{main} 
If $\M$ is realizable, then the numbers $\mu^k$ form a log-concave sequence.
\end{theorem}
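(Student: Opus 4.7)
The plan is to combine Lemma \ref{l:formula} with the Khovanskii-Teissier inequality of Lemma \ref{l:logconcave}. Lemma \ref{l:formula} writes
\[
\mu^k = \deg\big(\alpha^{r-k} \cup (\Crem^*\alpha)^k \cup \Delta_\M\big)
\]
on the smooth complete toric variety $X = X(\Delta_{\U_n})$, which matches the form $a_i = (\iota^*\alpha_1^{r-i} \cup \iota^*\alpha_2^i \cup c) \cap [X]$ of Lemma \ref{l:logconcave} with $\alpha_1 = \alpha$, $\alpha_2 = \Crem^*\alpha$, and $c = \Delta_\M$. It therefore suffices to check that these three ingredients satisfy the hypotheses of Lemma \ref{l:logconcave}.

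First I would verify that $\Delta_\M$ is Poincar\'e dual to an $r$-dimensional irreducible subvariety $Y \subset X$. Take $Y = \widetilde{V}$, the closure in $X$ of $V^\circ = V \cap T$, where $V \subset \mathbb{P}^n$ is the $r$-dimensional projective subspace realizing $\M$. Since $V$ is irreducible and $V^\circ$ is a nonempty open subset (recall that $\M$ is assumed simple, so $V^\circ$ is the complement of a proper arrangement), $\widetilde{V}$ is irreducible of dimension $r$. By the last sentence of Section~4, $\widetilde{V}$ meets the torus orbits of $X$ properly and $\Delta_\M$ is its associated cocycle, so Lemma \ref{assoccyc} supplies the Poincar\'e duality $\Delta_\M \cap [X] = [\widetilde{V}]$.

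Next I would check the nefness of $\alpha$ and $\Crem^*\alpha$. The divisor $\alpha = \min\{0, x_1, \ldots, x_n\}$ is nef by the remark opening Section~5: it takes value $0$ or $1$ on every codimension $1$ cone of $\Delta_{\U_n}$. For $\Crem^*\alpha$, recall from Example \ref{uniform matroid} that $\Crem$ acts by multiplication by $-1$ on $N$ and extends to an automorphism of $X(\Delta_{\U_n})$ because $\Delta_{\U_n}$ is invariant under this involution. Thus $\Crem^*\alpha$ is the pullback of the nef piecewise linear function $\alpha$ along an automorphism of $\Delta_{\U_n}$, and so is again nef; equivalently, $\Crem^*\alpha(x) = \alpha(-x)$ still evaluates to $0$ or $1$ on every codimension $1$ cone.

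With these verifications in place, Lemma \ref{l:logconcave} immediately yields the log-concavity of $a_k = \mu^k$, and hence of the coefficients $\mu_k$ of $\chi_\M(q)$ as observed after the definition of the reduced characteristic polynomial. I do not expect a serious obstacle: the only subtle point is that Lemma \ref{l:logconcave} requires $Y$ to be \emph{irreducible}, which is why the argument works for realizable $\M$ (where $\widetilde{V}$ is irreducible) but leaves open the general Rota-Heron-Welsh conjecture, where no irreducible variety with the prescribed Poincar\'e dual is available.
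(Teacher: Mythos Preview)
Your proposal is correct and follows essentially the same route as the paper: invoke Lemma~\ref{l:formula}, identify $\Delta_\M$ as the associated cocycle of the irreducible variety $\widetilde{V}$, observe that $\alpha$ and $\Crem^*\alpha$ are nef (the latter because $\Crem$ is an automorphism of $X(\Delta_{\U_n})$), and apply Lemma~\ref{l:logconcave}. The only small addition in the paper's version is the one-line remark that if $\M$ is realizable over $\k$ then it is realizable over the algebraic closure $\overline{\k}$, which is needed since Lemma~\ref{l:logconcave} is stated over an algebraically closed field; you may wish to insert this.
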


\begin{proof}
Suppose $\M$ is realized by a projective subspace $V$ in $\P^n$ over a field $\k$.  Then $\M$ is realizable over the algebraic closure $\overline{\k}$. Note that $\Delta_{\M}$ is Poincar\'e-dual to the closure $\widetilde{V}$ of $V^\circ=V \cap T$ in $X(\Delta_{\U_n})$. Since $\Crem$ is an automorphism of $X(\Delta_{\U_n})$ and $\alpha$ is nef, $\Crem^*\alpha$ is nef. Now, Lemma \ref{l:logconcave} applies to the formula of Lemma \ref{l:formula}.
\end{proof}

\begin{proof}[Proof of Theorem \ref{graph}]
Let $\pi_1,\pi_2$ be the projection of $\mathbb{P}^n \times \mathbb{P}^n$ onto the first and the second factor respectively. Write $L_1,L_2$ for the pull-back of the line bundle $\mathcal{O}(1)$ on $\mathbb{P}^n$ by $\pi_1,\pi_2$ respectively. Note that $X=X(\Delta_{\U_n})$ is realized in $\mathbb{P}^n \times \mathbb{P}^n$ as the closure of the graph of $\Crem:\mathbb{P}^n \dashrightarrow \mathbb{P}^n$. With this identification, the closure $\widetilde{V}$ of $V^\circ$ in $X$ is the graph closure of $\Crem$ restricted to $V \setminus \cA$. Note that the pull-back of $L_1,L_2$ to $X$ is the line-bundle corresponding to $\alpha, \Crem^* \alpha$ respectively. Consequently,
\[
(L_1^{r-k} \cup L_2^k) \cap [\widetilde{V}] = (\alpha^{r-k} \cup (\Crem^*\alpha)^k \cup \Delta_\M) \cap [X] = \mu^k.
\]
Therefore
\[
[\widetilde{V}] = \sum_{i=0}^r \mu^i [\mathbb{P}^{r-i} \times \mathbb{P}^i] \in A_r(\mathbb{P}^n \times \mathbb{P}^n)
\]
in the Chow homology group of $\mathbb{P}^n \times \mathbb{P}^n$.
\end{proof}

\bibliography{math}
\bibliographystyle{amsalpha}

\end{document}